 \newtheorem{theorem}{Theorem}[section]
 \newtheorem{corollary}[theorem]{Corollary}
 \newtheorem{lemma}[theorem]{Lemma}
 \newtheorem{proposition}[theorem]{Proposition}
 \theoremstyle{definition}
 \theoremstyle{remark}
 \newtheorem{remark}[theorem]{Remark}
  \numberwithin{equation}{section}
\renewcommand{\phi}{\varphi}
\renewcommand{\theta}{\vartheta}
\DeclareMathOperator{\tform}{\mathfrak{t}}
\DeclareMathOperator{\wform}{\mathfrak{w}}
\DeclareMathOperator{\mul}{mul}
\DeclareMathOperator{\re}{Re}
\DeclarePairedDelimiterX\sipt[2]{(}{)^{}_{T}}{#1\,\delimsize\vert\,#2}
\DeclarePairedDelimiterX\sipv[2]{(}{)_{v}}{#1\,\delimsize\vert\,#2}
\DeclarePairedDelimiterX\sipw[2]{(}{)_{w}}{#1\,\delimsize\vert\,#2}
\newcommand{\balg}{\mathscr{B}}
\newcommand{\dom}{\operatorname{dom}}
\newcommand{\ran}{\operatorname{ran}}
\newcommand{\pt}{P^{}_T}
\newcommand{\qt}{Q^{}_T}
\newcommand{\hil}{\mathcal H}
\newcommand{\gil}{\mathcal G}
\newcommand{\kil}{\mathcal K}
\DeclareMathOperator{\reg}{s}
\DeclareMathOperator{\sing}{sing}
\newcommand{\treg}{T_{\reg}}
\DeclarePairedDelimiterX\sip[2]{(}{)}{#1\,\delimsize\vert\,#2}
\DeclarePairedDelimiterX\siptilde[2]{(}{)_{\!_{\widetilde{A}}}}{#1\,\delimsize\vert\,#2}
\DeclarePairedDelimiterX\sipf[2]{(}{)_{f}}{#1\,\delimsize\vert\,#2}
\DeclarePairedDelimiterX\sipg[2]{(}{)_{g}}{#1\,\delimsize\vert\,#2}
\DeclarePairedDelimiterX\siptw[2]{(}{)_{\tform+\wform}}{#1\,\delimsize\vert\,#2}
\DeclarePairedDelimiterX\set[2]{\{}{\}}{#1\,:\,#2}
\DeclarePairedDelimiterX\dual[2]{\langle}{\rangle}{#1,#2}
\DeclarePairedDelimiterX\sipa[2]{(}{)_{\!_A}}{#1\,\delimsize\vert\,#2}
\DeclarePairedDelimiterX\sipc[2]{(}{)_{\!_C}}{#1\,\delimsize\vert\,#2}
\DeclarePairedDelimiterX\sipab[2]{(}{)_{\!_{A+B}}}{#1\,\delimsize\vert\,#2}
\DeclarePairedDelimiterX\sipb[2]{(}{)_{\!_B}}{#1\,\delimsize\vert\,#2}
\newcommand{\opmatrix}[4]{\begin{bmatrix} #1 &  #2 \\   #3& #4\end{bmatrix}}
\newcommand{\pair}[2]{\begin{bmatrix}#1 \\  #2 \end{bmatrix}}
\title[Canonical graph contractions]{Canonical graph contractions of linear relations on Hilbert spaces}
\author[Zs. Tarcsay]{Zsigmond Tarcsay}
\thanks{The corresponding author Zs. Tarcsay was supported by DAAD-TEMPUS Cooperation Project ``Harmonic Analysis and Extremal Problems'' (grant no. 308015). Project no. ED 18-1-2019-0030 (Application-specific highly reliable IT
solutions)
has been implemented with the support provided from the National Research,
Development and Innovation Fund of Hungary, financed under the Thematic
Excellence
Programme funding scheme.}
\address{%
Zs. Tarcsay \\ Department of Applied Analysis  and Computational Mathematics\\ E\"otv\"os Lor\'and University\\ P\'azm\'any P\'eter s\'et\'any 1/c.\\ Budapest H-1117\\ Hungary}
\email{tarcsay@cs.elte.hu}
\author[Z. Sebesty\'en]{Zolt\'an Sebesty\'en}
\address{%
Z. Sebesty\'en \\ Department of Applied Analysis  and Computational Mathematics\\ E\"otv\"os Lor\'and University\\ P\'azm\'any P\'eter s\'et\'any 1/c.\\ Budapest H-1117\\ Hungary}
\email{sebesty@cs.elte.hu}
\subjclass[2010]{Primary 47A05, 47A05}
\keywords{Linear relation, unbounded operator, multivalued operator, closed operator, graph contraction, Stone decomposition}
\dedicatory{Dedicated to Henk de Snoo on the occasion of his 75th birthday}
\begin{document}
\maketitle
\begin{abstract}
Given a closed linear relation $T$ between two Hilbert spaces $\hil$ and $\kil$, the corresponding first and second coordinate projections $P_T$ and $Q_T$ are both linear contractions from $T$ to $\hil$, and  to $\kil$, respectively. In this paper we investigate the features of these graph contractions.  We show among others that $P_T^{}P_T^*=(I+T^*T)^{-1}$, and that $Q_T^{}Q_T^*=I-(I+TT^*)^{-1}$. The ranges  $\ran P_T^{*}$ and $\ran Q_T^{*}$ are proved to be closely related to the so called `regular part' of $T$. The connection of the graph projections to Stone's decomposition of a closed linear relation is also discussed. 
\end{abstract}
\section{Introduction}
When dealing with (unbounded) operators, it is sometimes beneficial to identify them with their graph, that is, to treat them as linear subspaces of the corresponding product space. This approach is especially useful if the operator in question is non-closable, that is, when the closure of its graph is not the graph of a `single-valued' operator anymore. Similarly, the adjoint of a linear transformation can be interpreted as an operator only if it is densely defined. 

The theory of linear relations (or `multi-valued' linear operators in other words) between Hilbert spaces goes back at least to the fundamental paper by R. Arens \cite{Arens}. By definition, a linear relation $T$ between two Hilbert spaces $\hil$ and $\kil$ is just a vector subspace of the product Hilbert space $\hil\times \kil$. In this way, the only (but significant) difference between operators and relations is that $\{0,k\}\in T$ does not necessarily imply $k=0$. However, this generality greatly simplifies the handling of operations such as taking closure, adjoint, or inverse. 

A linear relation $T$ consists of certain ordered pairs $\{x,y\}$ of $\hil\times\kil$, so one may consider the first and second coordinate projections of $T$ into $\hil$ and $\kil$, respectively:
\begin{equation*}
    \pt\{x,y\}\coloneqq x,\qquad \qt\{x,y\}\coloneqq y,\qquad \{x,y\}\in\hil. 
\end{equation*}
Note that both $\pt$ and $\qt$ are continuous (with norm bound $1$) if we endow $T$ with the inner product coming form that of $\hil\times\kil$. We shall therefore call $\pt$ and $\qt$ the \emph{canonical contractions} of $T$. Assume in addition that $T$ is a closed relation, then the domain of $\pt$ and $\qt$ becomes a Hilbert space and thus we may take the adjoint operators $P_T^*:\hil\to T$ and $Q_T^*:\kil\to T$, and also the product operators $ \pt P_T^*, \pt Q_T^*, \qt P_T^*$ and $ \qt Q_T^* $ are well defined contractions. 

 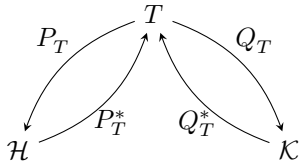
\begin{figure}[H]
\begin{tikzpicture}[->, node distance=1.8cm, >=stealth]
\node (G) {$T$};
\node (K) [right of=G, below of=G] {$\mathcal{K}$};
\node (H) [left of=G, below of=G] {$\mathcal{H}$};
\draw (G) to [bend right=25] node[pos=0.5,above] {$P^{}_T$\quad\quad} (H);
\draw (H) to [bend right=25] node[pos=0.5,below] {~~$P^{*}_T$}  (G);
\draw (G) to [bend left=25] node[pos=0.5,above] {\quad$Q^{}_T$} (K);

\draw (K) to [bend left=25] node[pos=0.5,below] {$Q^{*}_T$~~\quad}(G);
\end{tikzpicture}
\caption{The canonical graph contractions and their adjoint.}
 \end{figure} 

The present paper is devoted to the study of these canonical contractions and their connection with the closed linear relation $T$. First we are going to show that the range $\ran P_T^*\subseteq T$ is always a  regular relation (that is, the graph of a closable operator). Namely, its closure is identical with the regular part of $T$. (Recall that the regular part $\treg$ of $T$ is defined as $\treg\coloneqq (I-P_m)T$ where $P_m$ is the orthogonal projection of $\kil$ onto $\mul T$, see \cite{Hassi2007}.)  It will also turn out that $\ran P_T^*$ is the graph of the restriction of $\treg$ to $\dom T^*T$. 

In \cite{SebTarcs2019} the authors established necessary and sufficient conditions for a pair $S,T$ of operators in order that they satisfy 
\begin{equation}\label{E:symmpair}
S^*=T\qquad  \mbox{and}\qquad  T^*=S,
\end{equation}
cf. also  \cites{SebTarcs2016, SebTarcs2020, Popovici, SebTarcs2013} and \cite{Sandovici} for the relation case. Below we provide some further characterizations for \eqref{E:symmpair} by means of the corresponding graph contractions $\pt,\qt$ and $P_S^{},Q_S^{}$. As an application we offer a new proof of the self-adjointness of the relations $T^*T^{**}$ and $T^{**}T^*$ by proving that  $(I+T^*T^{**})^{-1}=\pt P_T^*$ and $\qt Q_T^*= I-(I+T^{**}T^*)^{-1}$. Finally, we show how Stone's decomposition \cite{Stone} of a closed linear relation $T$ can be obtain by applying the results of the paper. 

\section{Linear relations}
Throughout the paper, $\hil$ and $\kil$ will denote real or complex Hilbert spaces. A linear relation $T$ between $\hil$ and $\kil$ is nothing but a linear subspace of the product Hilbert space $\hil\times \kil$. We shall call the relation $T$  closed if it is a closed subspace of $\hil\times\kil$. Accordingly, the closure $\overline T$ of $T$ is always a closed linear relation, and being so, it becomes a Hilbert space with respect to the induced inner product  
\begin{equation*}
    \sipt{\{x,y\}}{\{u,v\}}\coloneqq \sip{x}{u}^{}_{\hil}+\sip{y}{v}^{}_{\kil},\qquad \{x,y\},\{u,v\}\in \overline T.
\end{equation*}
If we refer to $\overline T$ as the above Hilbert space, we shall denote it by $\gil(T)$. 

Recall that every linear operator $T:\hil\to\kil$ when identified with its graph is a linear relation:
\begin{equation*}
    T\equiv \set{\{x,Tx\}}{x\in\dom T}.
\end{equation*}
Nevertheless, the closure (of the graph) of a linear operator is no longer an operator in general, namely,  it may be that $\{0,k\}\in\overline{T}$ for some non-zero $k$. Accordingly, we call  $T$ closable if its closure $\overline{T}$ is itself an operator.

The domain, range, kernel and multivalued part of a linear relation $T$ are defined to be the following linear subspaces, respectively:
\begin{align*}
    \dom T&\coloneqq \set[\big]{x\in\hil}{\{x,y\}\in T},& \ran T\coloneqq\set[\big]{y\in\kil}{\{x,y\}\in T},\\
    \ker T&\coloneqq \set[\big]{x\in\hil}{\{x,0\}\in T},&\mul T\coloneqq \set[\big]{y\in\kil}{\{0,y\}\in T}.
\end{align*}
It is immediate that $\ker T$ and $\mul T$ are both closed subspaces whenever $T$ itself is closed. It goes also without saying that $T$ is (the graph of) and operator if and only if $\mul T=\{0\}$, and that $T$ is (the graph of) a closable operator if and only of $\mul \overline{T}=\{0\}$. 

The inverse of a linear relation $T$ is  defined as 
\begin{equation*}
    T^{-1}\coloneqq \set[\big]{\{y,x\}}{\{x,y\}\in T}.
\end{equation*}
If $S$ and $T$ are both linear relations then their product $TS$ is given by 
\begin{equation*}
    TS\coloneqq \set[\big]{\{x,z\}}{ \{x,y\}\in S \quad \mbox{and} \quad \{y,z\}\in T \quad \mbox{for some $y$}}.
\end{equation*}
The operatorlike sum of $S$ and $T$ is 
\begin{equation*}
    S+T\coloneqq \set[\big]{\{x,y+z\}}{\{x,y\}\in S, \{x,z\}\in T},
\end{equation*}
just like in the case of operators, 
while the componentwise (or Minkowski) sum is 
\begin{equation*}
    S\,\widehat+\,T\coloneqq \set[\big]{\{x+v,y+z\}+}{\{x,y\}\in S, \{v,z\}\in T}.
\end{equation*}

The adjoint of a linear relation $T$ is defined by 
\begin{equation*}
    T^*\coloneqq W(T)^{\perp},
\end{equation*}
where $W:\hil\times\kil\to\kil\times \hil$ is the `flip' operator 
\begin{equation}\label{E:Wflip}
    W\{h,k\}\coloneqq \{k,-h\},\qquad \{h,k\}\in \hil\times \kil.
\end{equation}
It is immediate that   $T^*$ is a closed linear relation between $\kil$ and $\hil$ and that $T^{**}=\overline{T}$. Note that the following  orthogonal decomposition of $\kil\times\hil$ holds also true:
\begin{equation}
    T^*\,\widehat \oplus\, W(T^{**})=\kil\times \hil. 
\end{equation}
Another equivalent definition of $T^*$ might be given in terms of the inner product, namely,
\begin{equation*}
    \{k,h\}\in T^*\quad\iff\quad \sip{y}{k}^{}_{\kil}=\sip{x}{h}^{}_{\hil},\qquad\forall \{x,y\}\in T.
\end{equation*}
Recall also the following identities:
\begin{equation*}
    \ker T^{*}=(\ran T)^{\perp},\qquad \mul  T^*=(\dom T)^{\perp}.
\end{equation*}
 
 For a given linear relation $T$, let us denote by $P_m$ the orthogonal projection of $\kil$ onto $\mul \overline{T}$. The \textit{regular part} of $T$ is defined as the linear relation 
\begin{equation}\label{E:Treg}
    \treg\coloneqq \set[\big]{\{x,(I-P_m)y\}}{\{x,y\}\in T}.
\end{equation}
It can be shown that  $\treg$ is (the graph of) a  closable operator. In contrast, the  \textit{singular part} following linear relation
\begin{equation}\label{E:Tsing}
    T_{\sing}\coloneqq \set[\big]{\{x,P_my\}}{\{x,y\}\in T}
\end{equation}
is a so called singular relation which means that $\overline{T_{\sing}}$ is the product of two closed subspaces. By means of the regular and singular parts, the linear relation $T$ allows the following canonical sum decomposition
\begin{equation*}
    T=\treg+T_{\sing},
\end{equation*}
 see \cite{Hassi2007}*{Theorem 4.1}. Note also immediately that the regular and singular parts may be written as  
\begin{equation*}
    \treg=(I-P_m)T,\qquad T_{\sing}=P_mT.
\end{equation*}
 We shall also use the fact that ``regular part'' and ``closure'' operations commute in the sense that
\begin{equation}\label{E:tregclosure}
    (\treg)^{**} =(T^{**})_{\reg},
\end{equation}
see \cite{Hassi2007}*{Proposition 4.5}. An important consequence of this result is that  the regular part of a closed linear relation is closed itself, and also that $T_{\reg}\subseteq T$, provided that $T$ is closed.

The interested reader is referred to the books \cites{deSnoobook, Schmudgen} and papers \cites{Hassi2007, Hassi2009,  Arens}  where, in addition to the proofs of the above statements,  more information about linear relations can be found. 

\section{Canonical graph contractions of a linear relation}
Let $T$ be a linear relation between the real or complex Hilbert spaces $\hil$ and $\kil$. The canonical graph contractions $\pt:\overline T\to \hil$ and $\qt:\overline T\to \kil$ of  $T$ are defined as the mappings 
\begin{equation*}
    \pt\{x,y\}\coloneqq x,\qquad \qt\{x,y\}\coloneqq y,\qquad \{x,y\}\in \overline T.
\end{equation*}
Note that both of those mappings are linear contractions if we consider them  as operators from the Hilbert space $\gil(T)$ into $\hil$ and $\kil$, respectively:
\begin{equation*}
    \pt\in \balg(\gil(T);\hil), \|\pt\|\leq 1\qquad\mbox{and}\qquad \qt\in \balg(\gil(T);\kil), \|\qt\|\leq 1.
\end{equation*}
Therefore, their adjoint operators $P_T^*\in\balg(\hil;\gil(T))$ and $Q_T^*\in\balg(\kil;\gil(T))$ are themselves linear contractions, and their ranges $\ran P_T^*$ and $\ran Q_T^*$ are linear relations.

Below we are going to examine the properties of the contractions $P_T$ and $Q_T$ and their connection with $T$ in detail. First   let us establish a few elementary facts. 
\begin{proposition}
Let $T$ be a linear relation between $\hil$ and $\kil$. Then
\begin{enumerate}[label=\textup{(\alph*)}]
    \item $\pt=P_{\overline T}$ and $\qt=Q_{\overline{T}}$,
    \item $T$ is (the graph of) a closable operator if and only if $\ker P_T$ is trivial.
\end{enumerate}  
\end{proposition}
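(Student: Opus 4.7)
The plan is to prove both assertions by unwinding definitions; I do not anticipate any real obstacles in either part.

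For part (a), I would simply note that the mappings $\pt$ and $\qt$, as they are defined immediately above the proposition, are the first- and second-coordinate projections acting on the Hilbert space $\gil(T)=\overline T$. Applying that same definition with $\overline T$ in place of $T$, the operator $P_{\overline T}$ is the first-coordinate projection whose domain is $\overline{\overline T}$. Since $\overline T$ is already closed we have $\overline{\overline T}=\overline T$, so $\pt$ and $P_{\overline T}$ are defined on the same space by the same rule $\{x,y\}\mapsto x$; therefore they coincide. The argument for $\qt=Q_{\overline T}$ is word-for-word the same.

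For part (b), my plan is to compute $\ker\pt$ explicitly. By definition
\begin{equation*}
    \ker \pt=\set[\big]{\{x,y\}\in\overline T}{x=0}=\set[\big]{\{0,y\}}{y\in\mul\overline T},
\end{equation*}
so the assignment $y\mapsto \{0,y\}$ is a linear bijection (in fact an isometry in the $\gil(T)$-norm, though this is unnecessary) from $\mul\overline T$ onto $\ker\pt$. Hence $\ker\pt=\{0\}$ if and only if $\mul\overline T=\{0\}$. Section~2 already records that $T$ is (the graph of) a closable operator exactly when $\mul\overline T=\{0\}$, which completes the proof.

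The only step that warrants any care is the identification $\mul\overline T\leftrightarrow\ker\pt$, but this is immediate from the respective definitions of the two subspaces.
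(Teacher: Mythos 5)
Your proposal is correct and follows the same route as the paper: part (a) is immediate from the definition of $\pt$ together with $\overline{\overline T}=\overline T$, and part (b) rests on the identification $\ker\pt=\{0\}\times\mul\overline T$, which is exactly the observation the paper uses.
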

\begin{proof}
The proof of (a) is straightforward from the definition of $P_T$. Statement (b) is obtained by noticing that $\ker\pt=\{0\}\times \mul \overline T$.
\end{proof}
In view of part (a) of the preceding proposition, there is no loss of generality in assuming that the linear relation $T$ is closed. In light of this, with a few exceptions, we will do so.

We start out by analysing the first coordinate projection $P_T$.
\begin{lemma}\label{L:lemma1}
Let $T$ be a linear relation between $\hil$ and $\kil$, then for every $h\in\hil$ we have $\qt P_T^*h\in\dom T^*$, that is, 
\begin{equation}
    \ran Q^{}_TP^*_T\subseteq \dom T^*.
\end{equation}
\end{lemma}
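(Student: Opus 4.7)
The plan is straightforward: reinterpret $P_T^*h$ as an orthogonal projection in $\hil\times\kil$, and then translate the orthogonality of the residual vector into membership in $T^*$ via the inner-product characterization of the adjoint recalled in Section~2.

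By part (a) of the preceding proposition I may assume $T$ is closed, so that $T\subseteq\hil\times\kil$ is itself a Hilbert space and the adjoint $P_T^*\colon\hil\to T$ is well defined. Fix $h\in\hil$ and write $P_T^*h=\{a,b\}\in T$; the claim $\qt P_T^*h\in\dom T^*$ then amounts to showing $b\in\dom T^*$.

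My first step would be to recognise $P_T^*h$ as the orthogonal projection of $\{h,0\}\in\hil\times\kil$ onto the closed subspace $T$. This is immediate from the defining property of the adjoint: for every $\{x,y\}\in T$,
\begin{equation*}
    \sipt{P_T^*h}{\{x,y\}}=\sip{h}{\pt\{x,y\}}^{}_{\hil}=\sip{h}{x}^{}_{\hil}=\sip{\{h,0\}}{\{x,y\}}^{}_{\hil\times\kil}.
\end{equation*}
Consequently, the residual $\{h,0\}-\{a,b\}=\{h-a,-b\}$ is orthogonal to $T$ in $\hil\times\kil$.

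The second step would be to unfold this orthogonality. Expanding $\sip{\{h-a,-b\}}{\{x,y\}}^{}_{\hil\times\kil}=0$ gives $\sip{h-a}{x}^{}_{\hil}=\sip{b}{y}^{}_{\kil}$ for every $\{x,y\}\in T$, which (after conjugating both sides if we are in the complex case) is exactly the inner-product criterion for the pair $\{b,h-a\}$ to lie in $T^*$. In particular $b\in\dom T^*$, as required.

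I do not anticipate a genuine obstacle: the lemma is essentially a direct unfolding of the definitions of $P_T^*$ and $T^*$. The one point requiring attention is the sign bookkeeping coming from the flip operator $W$ in $T^*=W(T)^\perp$, so that the correct pair $\{b,h-a\}$ (rather than, say, $\{b,a-h\}$) is identified as the element of $T^*$.
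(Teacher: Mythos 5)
Your proof is correct and follows essentially the same route as the paper: both unfold the adjoint identity $\sipt{P_T^*h}{\{x,y\}}=\sip{h}{x}_{\hil}$ over $\{x,y\}\in T$, rearrange, and recognize the resulting identity as the inner-product criterion for $\{b,h-a\}\in T^*$. Your observation that $P_T^*h$ is the orthogonal projection of $\{h,0\}$ onto $\overline{T}$ is a pleasant geometric gloss (and your handling of the conjugation and of the reduction to closed $T$ is fine), but it does not change the substance of the argument.
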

\begin{proof}
Consider $h\in \hil$ and let $P_T^*h\coloneqq \{z,w\}\in \ran P_T^*$, then we have 
\begin{equation*}
    \sip{x}{z}^{}_{\hil}+\sip{y}{w}^{}_{\kil}=\sipt[\big]{\{x,y\}}{P_T^*h}=\sip{x}{h}^{}_{\hil},
\end{equation*}
for every $\{x,y\}\in T$. Hence we get
\begin{equation*}
    \sip{y}{w}^{}_{\kil}=\sip{x}{h-z}^{}_{\hil},
\end{equation*}
which implies that $\{w,h-z\}\in T^*$ and therefore  $w=\qt P^*_Th\in\dom T^*$.
\end{proof}
\begin{proposition}\label{L:lemma2}
Let $T$ be a linear relation between $\hil$ and $\kil$, then
\begin{equation*}
    \dom T^*T=\pt(T\cap \ran P_T^*).
\end{equation*}
\end{proposition}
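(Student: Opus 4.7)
The plan is to first characterize the range of $P_T^*$ explicitly; once that description is in place, the identity will follow almost by unwinding definitions. Specifically, I would aim to prove that
\begin{equation*}
    \ran P_T^* = \set[\big]{\{x,y\}\in T}{y\in \dom T^*}.
\end{equation*}

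To establish this description, I would apply the defining property of the adjoint directly. For $h\in\hil$, writing $\{x,y\}\coloneqq P_T^*h$ (which lies in $T$ by construction), the identity
\begin{equation*}
    \sipt[\big]{\{u,v\}}{\{x,y\}} = \sip{u}{h}, \qquad \{u,v\}\in T,
\end{equation*}
rearranges to $\sip{v}{y} = \sip{u}{h-x}$ for every $\{u,v\}\in T$. By the inner-product characterization of $T^*$, this is in turn equivalent to $\{y,h-x\}\in T^*$, and in particular $y\in\dom T^*$. This yields the inclusion ``$\subseteq$'' of the claim (and incidentally recovers Lemma~\ref{L:lemma1}). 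For the reverse inclusion, given $\{x,y\}\in T$ with $y\in\dom T^*$, I would pick any $z$ satisfying $\{y,z\}\in T^*$ and set $h\coloneqq x+z$; then the same calculation run backwards shows $P_T^*h=\{x,y\}$, as desired.

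Having established the characterization, the proposition is essentially immediate. Since $\ran P_T^*\subseteq T$ automatically, the intersection $T\cap \ran P_T^*$ equals $\ran P_T^*$, so $\pt(T\cap \ran P_T^*)$ is the set of first coordinates $x$ of pairs $\{x,y\}\in T$ with $y\in\dom T^*$. By the definition of the product of linear relations, this is precisely $\dom T^*T$. The only step that involves any real computation is the adjoint argument above; I do not foresee any genuine obstacle, as both inclusions there are short and symmetric applications of the definitions of $P_T^*$ and $T^*$.
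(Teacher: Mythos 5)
Your unwinding of the adjoint is fine, but the packaging contains a real error: the assertion that $P_T^*h$ ``lies in $T$ by construction'', and its consequence that $\ran P_T^*\subseteq T$, so that $T\cap\ran P_T^*=\ran P_T^*$. In this paper $\pt$ is defined on the closure, $\pt\colon\gil(T)=\overline{T}\to\hil$, so $P_T^*$ maps $\hil$ into $\overline{T}$, and for a non-closed relation its range need not lie in $T$. Concretely, take $\kil=\hil$, let $D\subsetneq\hil$ be a dense proper subspace and $T=\set{\{x,0\}}{x\in D}$. Then $\overline{T}=\hil\times\{0\}$ and $\pt$ is a unitary of $\gil(T)$ onto $\hil$, so $\ran P_T^*=\overline{T}\not\subseteq T$; your claimed identity $\ran P_T^*=\set{\{x,y\}\in T}{y\in\dom T^*}$ fails (the right-hand side is $T$), and your final chain of equalities would give $\dom T^*T=\pt(\ran P_T^*)=\hil$, whereas $\dom T^*T=D$. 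This is exactly the subtlety the intersection with $T$ in the statement is there to handle; compare the corollary following the proposition, where $\ran P_T^*=T|_{\dom T^*T}$ is asserted only under the additional hypothesis that $T$ is closed. Your computation of $P_T^*h$ only shows that $P_T^*h\in\overline{T}$ and that its second coordinate lies in $\dom T^*$; membership in $T$ cannot be deduced and must be assumed.

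The repair is small and turns your argument into essentially the paper's proof: characterize the intersection rather than the range, namely $T\cap\ran P_T^*=\set{\{x,y\}\in T}{y\in\dom T^*}$. For the inclusion ``$\subseteq$'' you assume $\{x,y\}\in T\cap\ran P_T^*$, write $\{x,y\}=P_T^*h$, and your computation gives $\{y,h-x\}\in T^*$, hence $y\in\dom T^*$ (this is Lemma \ref{L:lemma1}); together with $\{x,y\}\in T$ this yields $x\in\dom T^*T$. For ``$\supseteq$'', your choice $h\coloneqq x+z$ with $\{y,z\}\in T^*$ gives $\sipt[\big]{\{u,v\}}{\{x,y\}}=\sip{u}{h}_{\hil}$ for all $\{u,v\}\in T$, and since $T$ is dense in $\gil(T)$ this forces $P_T^*h=\{x,y\}$ (both sides lie in $\overline{T}$); applying $\pt$ and using the definition of the product $T^*T$ then gives the stated identity. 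With this correction your argument coincides, step for step, with the proof in the paper.
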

\begin{proof}
Assume first that $\{x,y\}\in T\cap \ran P^*$, then $k\in \ran Q^{}_TP_T^*\subseteq \dom T^*$ by Lemma \ref{L:lemma1}, and therefore there exists $z\in\hil$ such that $\{y,z\}\in T^*$. This means that $\{x,z\}\in T^*T$ and therefore $x\in\dom T^*T$.

Suppose on the converse that $x\in\dom T^*T$, then $\{x,y\}\in T$ and $\{y,z\}\in T^*$ for some $y$ and $z$. It suffices to show that $\{x,y\}\in \ran P^*$. Let therefore $\{u,v\}\in T$, then we have 
\begin{align*}
\sipt[\big]{\{x,y\}}{\{u,v\}}&=\sip{x}{u}^{}_{\hil} +\sip yv^{}_{\kil}=\sip{x}{u}^{}_{\hil}+\sip{z}{u}^{}_{\hil}\\ 
&= \sip[\big]{x+z}{P_T\{u,v\}}^{}_{\hil}=\sipt[\big]{P_T^*(x+z)}{\{u,v\}},  
\end{align*}
whence it follows that $\{x,y\}=P_T^*(x+z)\in\ran P_T^*$.
\end{proof}
From the above lemma we get the following two straightforward corollaries:
\begin{corollary}
If $T$ is (the graph of) an operator, then
\begin{equation*}
    T|^{}_{\dom T^*T}= T\cap \ran P_T^*. 
\end{equation*}
If $T$ is closed in addition, then
\begin{equation*}
    T|^{}_{\dom T^*T}=\ran P_T^*.
\end{equation*}
\end{corollary}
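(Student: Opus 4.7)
The plan is to deduce both statements as essentially formal consequences of Proposition \ref{L:lemma2}, which gives the identity $\dom T^*T=\pt(T\cap \ran P_T^*)$. The crucial extra ingredient is that when $T$ is an operator, the second coordinate of each pair in $T$ is uniquely determined by the first, so that knowing the image under $\pt$ of a subset of $T$ is the same as knowing the subset itself.

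First I would argue the opening identity. Since $T$ is an operator, the restriction $\pt|_T$ of the first coordinate projection to $T$ is injective, so for any linear subspace $R\subseteq T$ one has $R=\set{\{x,Tx\}}{x\in \pt(R)}=T|_{\pt(R)}$. Applying this with $R\coloneqq T\cap \ran P_T^*$ and invoking Proposition \ref{L:lemma2} yields
\begin{equation*}
    T\cap \ran P_T^* = T|_{\pt(T\cap \ran P_T^*)}=T|_{\dom T^*T},
\end{equation*}
which is the first displayed equality.

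Next I would handle the closed case by showing that the intersection with $T$ is redundant, i.e. that $\ran P_T^*\subseteq T$. This is built into the setup: when $T$ is closed, $\gil(T)=T$ is itself the Hilbert space on which $\pt$ acts, and the adjoint $P_T^*$ is a bounded operator from $\hil$ to $\gil(T)=T$. Thus $\ran P_T^*\subseteq T$ and consequently $T\cap \ran P_T^*=\ran P_T^*$. Substituting this into the first identity gives the desired equality $T|_{\dom T^*T}=\ran P_T^*$.

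There is no serious obstacle here; the whole point is that Proposition \ref{L:lemma2} already does the work, and the corollary merely repackages it under the extra hypothesis that $T$ is single-valued (and, for the second part, closed). The only thing to be mindful of is that one must use the operator hypothesis to conclude that $\pt|_T$ is injective, so that the passage from $\pt(T\cap \ran P_T^*)$ to $T\cap \ran P_T^*$ itself is reversible.
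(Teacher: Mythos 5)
Your proposal is correct and matches the paper's intent: the paper states these as straightforward corollaries of Proposition \ref{L:lemma2}, and your argument—using single-valuedness of $T$ to recover $T\cap\ran P_T^*$ from its first-coordinate projection, and using $\ran P_T^*\subseteq\gil(T)=T$ in the closed case—is exactly the routine verification being left to the reader.
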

\begin{corollary}\label{C:cor3}
If $T$ is a closed linear relation then
\begin{equation*}
    \dom T^*T=\dom(\ran P_T^*).
\end{equation*}
\end{corollary}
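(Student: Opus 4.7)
The plan is short, because this corollary is essentially a one-line consequence of Proposition \ref{L:lemma2} once one notes a single containment.

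First I would observe that, since $T$ is closed, we have $\gil(T)=T$ as a Hilbert space, and accordingly $P_T^*:\hil\to T$. This immediately yields the inclusion
\begin{equation*}
    \ran P_T^*\subseteq T,
\end{equation*}
and therefore $T\cap \ran P_T^*=\ran P_T^*$.

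Next I would invoke Proposition \ref{L:lemma2}, which gives $\dom T^*T=P_T(T\cap \ran P_T^*)$. Combining this with the previous display,
\begin{equation*}
    \dom T^*T=P_T(\ran P_T^*).
\end{equation*}
Finally, since $\ran P_T^*$ is itself a linear relation between $\hil$ and $\kil$ (being a subspace of $T\subseteq \hil\times\kil$), applying the first coordinate projection $P_T$ to it returns precisely its domain, i.e.\ $P_T(\ran P_T^*)=\dom(\ran P_T^*)$, which completes the proof.

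There is no real obstacle here; the only thing to keep straight is the distinction between the first coordinate projection $P_T$ acting on elements of $T$ and the \emph{domain} operation applied to a linear relation, both of which pick out the first coordinate. Once it is noted that $\ran P_T^*$ is not merely contained in $\hil\times\kil$ but already in $T$ (which is where closedness of $T$ enters), the two coincide and the identification with $\dom T^*T$ is nothing more than a restatement of Proposition \ref{L:lemma2}.
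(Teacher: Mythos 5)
Your proof is correct and is exactly the route the paper intends: the paper states this as a straightforward consequence of Proposition \ref{L:lemma2}, and your observation that closedness gives $\ran P_T^*\subseteq T$ (so the intersection collapses and $P_T$ of that relation is its domain) is precisely the filling-in of that one-line argument.
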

Next we deal with the linear relation $\ran P_T^*\subseteq \overline{T}$. As it will turn out from the ensuing result, it is closely related to the regular part of $T$: 
\begin{theorem}\label{T:theorem1}
Let $T$ be a linear relation between $\hil$ and $\kil$, then  
\begin{equation*}
    \overline{\ran P_T^*}=\overline{\treg}.
\end{equation*}
In particular, $\ran P_T^*$ is always (the graph of) a closable operator.
\end{theorem}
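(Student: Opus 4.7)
The plan is to exploit the Hilbert space structure of $\gil(T)=\overline T$ and compute $\overline{\ran P_T^*}$ as the orthogonal complement of $\ker P_T$ within $\gil(T)$. Since $\ker P_T=\{0\}\times \mul \overline T$ (observed already in the preceding proposition), and $P_T$ is a bounded operator between the Hilbert spaces $\gil(T)$ and $\hil$, the standard identity gives
\begin{equation*}
\overline{\ran P_T^*}=\ort{\ker P_T}=\ort{(\{0\}\times\mul\overline T)},
\end{equation*}
where the complement is taken inside $\gil(T)$, that is, with respect to the induced inner product $\sipt{\cdot}{\cdot}$.

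Next I would unfold this complement. An element $\{x,y\}\in\overline T$ lies in $\ort{(\{0\}\times\mul\overline T)}$ if and only if $\sip{y}{m}^{}_{\kil}=0$ for every $m\in\mul\overline T$, equivalently, $P_m y=0$, where $P_m$ denotes the orthogonal projection of $\kil$ onto $\mul\overline T$. Hence
\begin{equation*}
\overline{\ran P_T^*}=\set[\big]{\{x,y\}\in\overline T}{P_m y=0}.
\end{equation*}
The task is then to recognise the right-hand side as $(\overline T)_{\reg}$. One inclusion is immediate: if $\{x,y\}\in\overline T$ with $P_m y=0$, then $\{x,y\}=\{x,(I-P_m)y\}\in (\overline T)_{\reg}$. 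For the reverse inclusion, given $\{x,(I-P_m)y\}\in (\overline T)_{\reg}$ with $\{x,y\}\in\overline T$, I would subtract off $\{0,P_m y\}$, which belongs to $\overline T$ because $P_m y\in\mul\overline T$; this shows $\{x,(I-P_m)y\}\in\overline T$, and its second component is manifestly orthogonal to $\mul\overline T$.

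Combining these steps with the identity \eqref{E:tregclosure}, namely $(T_{\reg})^{**}=(T^{**})_{\reg}$, yields $\overline{\ran P_T^*}=(\overline T)_{\reg}=\overline{T_{\reg}}$, which is the desired equality. The closability assertion is then a quick corollary: the regular part is always closable, so $\mul\overline{T_{\reg}}=\{0\}$, hence $\mul\overline{\ran P_T^*}=\{0\}$, i.e.\ $\ran P_T^*$ is itself the graph of a closable operator.

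The main care-point is bookkeeping the ambient space for orthogonal complements: the complement of $\ker P_T$ must be taken inside $\gil(T)$ under the graph inner product, not inside $\hil\times\kil$, since otherwise the defining orthogonality condition collapses in the wrong direction. Everything else is formal manipulation using the explicit description of $\mul\overline T$ and the definition of $T_{\reg}$.
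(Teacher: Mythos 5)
Your argument is correct and follows essentially the same route as the paper: identify $\overline{\ran P_T^*}$ with the orthogonal complement of $\ker P_T=\{0\}\times\mul\overline T$ inside $\gil(T)$, recognise that set as $(\overline T)_{\reg}$ using the fact that $\{0,P_m y\}\in\overline T$, and finish with the identity $(\overline T)_{\reg}=\overline{T_{\reg}}$ from \eqref{E:tregclosure}. The only difference is cosmetic bookkeeping of the two inclusions, so nothing further is needed.
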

\begin{proof}
First of all note that if $\{x,(I-P_m)y\}\in \overline T$ for some $\{x,y\}\in\hil\times \kil$, then necessarily $\{x,y\}\in\overline T$. Indeed, 
\begin{equation*}
    \{x,y\}=\{x,(I-P_m)y\}+\{0,P_my\}\in \overline{T}\,\widehat+\,(\{0\}\times \mul (\overline{T}))\subseteq \overline{T}+\overline{T}=\overline{T}.
\end{equation*}
Consequently, 
\begin{align*}
    \set[\big]{\{x,y\}\in \overline{T}}{y\in \ran P_m^{\perp}}&=\set[\big]{\{x,(I-P_m)y\}}{\{x,y\}\in \hil\times\kil, \{x,(I-P_m)y\}\in\overline{T}}\\
    &=\set[\big]{\{x,(I-P_m)y\}}{\{x,y\}\in\overline{T}, \{x,(I-P_m)y\}\in\overline{T}}.
\end{align*}
Since we have $\ker P_T=\{0\}\times \mul \overline T$, it follows that 
\begin{align*}
    \overline{\ran P_T^*}&=\big(\{0\}\times \mul\overline{T}\big)^\perp\\
    &=\set[\big]{\{x,y\}\in\overline{T}}{y\in(\mul\overline{T})^\perp }\\
    &=\set[\big]{\{x,y\}\in\overline{T}}{y\in\ran(I-P_m) }\\
    &=\set[\big]{\{x,(I-P_m)y\}}{y\in\kil, \{x,(I-P_m)y\}\in \overline{T} }\\
    &=\set[\big]{\{x,(I-P_m)y\}}{\{x,y\}\in\overline{T}}\\
    &=(\overline{T})_{\reg}=\overline{\treg},
\end{align*}
as it is claimed.
\end{proof}

We continue by describing the kernel and range spaces of the contractions $P_T$ and $P_T^*$:
\begin{theorem}
For every linear relation $T$ between $\hil$ and $\kil$ we have
\begin{enumerate}[label=\textup{(\alph*)}]
    \item $\ker P_T=\{0\}\times \mul \overline{T}$,
    \item $\ran P_T=\dom \overline{T}$,
    \item $\ker P_T^*=\mul T^*$,
    \item $\ran P_T^*=(\overline{\treg})|_{\dom T^*\overline{T}}.$
\end{enumerate}
\end{theorem}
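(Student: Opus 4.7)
My plan is to handle parts (a)--(c) as short consequences of definitions and then concentrate on (d), which is the only substantial statement.

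For (a), I would unwind $P_T\{x,y\}=x$: the kernel consists exactly of those pairs $\{0,y\}\in\overline T$, that is, $\{0\}\times\mul\overline T$. Part (b) is the defining formula $\ran P_T=\{x\in\hil : \exists\, y,\ \{x,y\}\in\overline T\}=\dom\overline T$. For (c) I would apply the standard Hilbert-space identity $\ker P_T^*=(\ran P_T)^{\perp}$, substitute part (b), and then invoke the identity $(\dom S)^{\perp}=\mul S^*$ recalled in Section~2 with $S=\overline T$, together with $T^*=(\overline T)^*$.

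The real work is (d). The strategy is to combine Theorem~\ref{T:theorem1} with Proposition~\ref{L:lemma2} applied to the closed relation $\overline T$. Theorem~\ref{T:theorem1} yields $\ran P_T^*\subseteq\overline{\treg}$, and identity~\eqref{E:tregclosure} says $\overline{\treg}=(T^{**})_{\reg}$ is the graph of a closed operator. Hence $\ran P_T^*$ is itself the graph of an operator, $P_T$ acts injectively on $\ran P_T^*$, and $P_T(\ran P_T^*)=\dom(\ran P_T^*)$. On the other hand, Proposition~\ref{L:lemma2} applied to $\overline T$ (using $P_{\overline T}=P_T$, $(\overline T)^*=T^*$, and $\ran P_T^*\subseteq\overline T$) gives
\[
    \dom T^*\overline T \;=\; P_T(\overline T\cap\ran P_T^*) \;=\; P_T(\ran P_T^*) \;=\; \dom(\ran P_T^*).
\]
Combining these two observations: for every $x\in\dom T^*\overline T$ there is a unique $y$ with $\{x,y\}\in\ran P_T^*$, and because $\ran P_T^*\subseteq\overline{\treg}$ this $y$ must be $\overline{\treg}(x)$; conversely, every element of $\ran P_T^*$ has its first coordinate in $\dom T^*\overline T$. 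This yields the claimed identity $\ran P_T^*=(\overline{\treg})|_{\dom T^*\overline T}$.

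The main delicate step I anticipate is the bookkeeping in (d): one must cleanly pass from the inclusion ``$\ran P_T^*$ is contained in the graph of the closed operator $\overline{\treg}$'' to the functional description of $\ran P_T^*$, and must invoke Proposition~\ref{L:lemma2} in its correct closed form (i.e.\ for $\overline T$ rather than $T$) in order to identify the domain as $\dom T^*\overline T$. Checking that the restriction notation on the right-hand side is meaningful---namely $\dom T^*\overline T\subseteq\dom\overline{\treg}$---is automatic from $\ran P_T^*\subseteq\overline{\treg}$ combined with the domain identification above. Everything else reduces to routine set-theoretic verification once \eqref{E:tregclosure} has been used to secure the operator nature of $\overline{\treg}$.
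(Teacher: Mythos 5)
Your proof is correct and follows essentially the paper's own route: parts (a)--(c) are treated as immediate, and (d) is obtained by combining Theorem~\ref{T:theorem1} (giving $\ran P_T^*\subseteq\overline{\treg}$, a graph of an operator) with the domain identification $\dom(\ran P_T^*)=\dom T^*\overline T$ from Proposition~\ref{L:lemma2}/Corollary~\ref{C:cor3}. The only cosmetic difference is that you apply Proposition~\ref{L:lemma2} directly to $\overline T$, while the paper first reduces to the closed case via $P_T=P_{\overline T}$ and \eqref{E:tregclosure} and then cites Corollary~\ref{C:cor3}.
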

\begin{proof}
Statements (a)-(c) are all straightforward, only point (d) needs some explanation. Note that we have identity $P_T=P_{\overline{T}}$ for every linear relation. On the other hand, $\overline{\treg}=(\overline{T})_{\reg}$ according to \eqref{E:tregclosure}. Therefore, without loss of generality we may assume that $T$ is closed, in which case (d) reduces to 
\begin{equation}\label{E:ranP=treg-2}
    \ran P_T^*=(\treg)|_{\dom T^*T}.
\end{equation}
By Theorem \ref{T:theorem1} we have $\ran P_T^*\subseteq \treg$ and by Corollary \ref{C:cor3}, $\dom (\ran P_T^*)=\dom T^*T$ whenever $T$ is closed. Since $\treg$ is the graph of an operator, we obtain
 \eqref{E:ranP=treg-2}. 
\end{proof}


\section{Linear relations adjoint to each other}
Let $T$ and $S$ be linear relations between $\hil$ and $\kil$, respectively, $\kil$ and $\hil$. We say that $T$ and $S$ are adjoint to each other (or that $T,S$ form an adjoint pair), if they satisfy
\begin{equation}\label{E:adjointpair1}
    T\subset S^*\qquad\mbox{and}\qquad S\subset T^*,
\end{equation}
or equivalently, if
\begin{equation}\label{E:adjointpair2}
    \sip yv^{}_{\kil}=\sip xu^{}_{\hil},\qquad \forall \{x,y\}\in T,\quad \forall \{v,u\}\in S.
\end{equation}
An important and natural question is under what conditions are the equations $T=S^*$ and $S=T^*$.
Below we provide  some necessary and sufficient conditions on the pair $S,T$  by means of the corresponding graph contractions $\pt,\qt$ and $P_S^{},Q_S^{}$ in order that they satisfy the weaker identities 
$T^{**}=S^*$ and $S^{**}=T^*$
 
\begin{theorem}\label{T:adjointpair}
Let $S,T$ be linear relations between $\hil$ and $\kil$, respectively, $\kil$ and $\hil$, that are adjoint to each other in the sense of \eqref{E:adjointpair1}. Then the  following statements are equivalent:
\begin{enumerate}[label=\textup{(\roman*)}]
    \item $S^*=T^{**}$ and $T^*=S^{**}$,
    \item \begin{enumerate}[label=\textup{(\alph*)}]
        \item $P_T^{}P_T^*+Q_S^{}Q_S^*=I_{\hil}$,
        \item $P_S^{}P_S^*+Q_T^{}Q_T^*=I_{\kil}$,
        \item $Q^{}_TP_T^*=P_S^{}Q_S^*$.
    \end{enumerate}
\end{enumerate}
\end{theorem}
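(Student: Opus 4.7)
The proof centres on a single explicit formula for the orthogonal projection onto $\overline T$ inside $\hil\times\kil$: for every $\{h,k\}\in \hil\times\kil$,
\[
E_{\overline T}\{h,k\} = P_T^* h + Q_T^* k.
\]
This is immediate from the defining property of $P_T^*,Q_T^*$, since for any $\{u,v\}\in\overline T$,
\[
\sipt{P_T^* h + Q_T^* k}{\{u,v\}} = \sip{h}{u}^{}_{\hil} + \sip{k}{v}^{}_{\kil} = \sip{\{h,k\}}{\{u,v\}}^{}_{\hil\times\kil}.
\]
An analogous formula for $\overline S\subseteq \kil\times\hil$, transported through the unitary flip $W$ from \eqref{E:Wflip}, produces an explicit formula for the projection onto $W^{-1}(\overline S)\subseteq \hil\times\kil$ in terms of $P_S^*,Q_S^*$; crucially, $W^{-1}(\overline S) = (S^*)^\perp$ in $\hil\times\kil$ by the very definition of the adjoint.

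For the implication (i)$\Rightarrow$(ii), the assumption $S^* = T^{**} = \overline T$ says precisely that $\overline T$ and $W^{-1}(\overline S)$ are orthogonal complements of each other in $\hil\times\kil$. Applying the decomposition to $\{h,0\}$ and identifying the $\overline T$-component $P_T^* h = \{P_TP_T^*h,\,Q_TP_T^*h\}$ with the complementary component expressed via the second projection formula, matching coordinates yields (a) and (c); feeding $\{0,k\}$ through the same scheme produces (b) together with the adjoint of (c), which is equivalent to (c).

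For the converse (ii)$\Rightarrow$(i), the adjoint-pair hypothesis together with closedness of $S^*$ gives $\overline T\subseteq S^*$, i.e.\ $\overline T\perp W^{-1}(\overline S)$. Hence the operator
\[
A := E_{\overline T} + E_{W^{-1}(\overline S)}
\]
is the orthogonal projection of $\hil\times\kil$ onto $\overline T\,\widehat\oplus\,W^{-1}(\overline S)$. Expanding $A\{h,k\}$ coordinate-wise via the two explicit formulas produces a $2\times 2$ operator matrix whose diagonal entries are $P_TP_T^* + Q_SQ_S^*$ and $Q_TQ_T^* + P_SP_S^*$, and whose off-diagonal entries are $Q_TP_T^* - P_SQ_S^*$ and $P_TQ_T^* - Q_SP_S^*$. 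Conditions (a), (b), (c), together with the adjoint of (c), collapse this to $I_{\hil\times\kil}$. This forces $\overline T\,\widehat\oplus\,W^{-1}(\overline S) = \hil\times\kil$, whence $\overline T = (W^{-1}(\overline S))^\perp = S^*$. The symmetry of (a), (b), (c) under $S\leftrightarrow T$ (which swaps (a) with (b) and replaces (c) by its adjoint) then also delivers $S^{**} = T^*$.

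The chief obstacle is bookkeeping: correctly identifying $(S^*)^\perp$ with the flipped copy of $\overline S$ and assembling the second projection formula with consistent signs, so that the four resulting block identities line up exactly with (a), (b), (c) and the adjoint of (c). Once these two projection formulas are in hand, both directions of the equivalence reduce to a single coordinate-wise matrix identity.
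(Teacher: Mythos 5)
Your proof is correct, and while it revolves around the same block operator as the paper's argument, it takes a genuinely different route, most visibly in the converse direction. The paper introduces the operator $U_{T,S}$ of \eqref{E:UTS}, shows that the adjoint-pair hypothesis makes it an isometry, deduces from (i) that it is surjective and hence unitary, and reads (ii) off from $U_{T,S}^{}U_{T,S}^*=I_{\hil\times\kil}$; for (ii)$\Rightarrow$(i) it combines the co-isometry relation with isometricity to get unitarity and then runs an elementwise orthogonality computation forcing $\{x,y\}=\{0,0\}$ to conclude $T^*\subset S$ and $S^*\subset T$. You instead stay entirely inside $\hil\times\kil$: your identity $E_{\overline T}\{h,k\}=P_T^*h+Q_T^*k$ is the $V_T^{}V_T^*$ formula that the paper exploits only in its final section on the characteristic projection, and your sum $E_{\overline T}+E_{W^{-1}(\overline S)}$ is precisely $U_{T,S}^{}U_{T,S}^*$ written as a sum of two projections. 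Together with the observation $(S^*)^\perp=W^{-1}(\overline S)$, both implications reduce to the statement that this sum equals $I_{\hil\times\kil}$ exactly when $\overline T$ and $W^{-1}(\overline S)$ are orthogonal complements; the adjoint-pair hypothesis is used only to guarantee orthogonality of the two closed subspaces, so that the sum is again a projection and the identity forces complementarity, giving $T^{**}=S^*$ with no element chasing, after which the $S\leftrightarrow T$ symmetry of (a)--(c) (with (c) replaced by its adjoint, which is equivalent) delivers $S^{**}=T^*$. Your route buys a shorter, more conceptual converse and makes the link with Stone's characteristic projection explicit; the paper's route buys the unitary $U_{T,S}$ itself, which packages the adjoint-pair condition as isometricity and is of independent interest. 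The only points demanding care in your write-up are the ones you flag: the sign conventions in identifying $(S^*)^\perp$ with the flipped copy of $\overline S$ (harmless, since these are linear subspaces), and the tacit use of $P_T^{}=P_{\overline T}^{}$, $Q_T^{}=Q_{\overline T}^{}$ so that passing to closures does not change the contractions.
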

\begin{proof}
Before proving the corresponding equivalences, let us introduce the following operator matrix
\begin{equation}\label{E:UTS}
    U_{T,S}\coloneqq \opmatrix{P_T}{-Q_S}{Q_T}{P_S}:\gil(T)\times \gil(S)\to \hil\times \kil,
\end{equation}
which acts between $\overline T\times \overline S$ and $\hil\times\kil$ by the correspondence
\begin{equation*}
    U_{T,S}\pair{\{x,y\}}{\{v,u\}}\coloneqq \{x-u,y+v\},\qquad \{x,y\}\in T, \{v,u\}\in S.
\end{equation*}
As $T$ and $S$ are adjoint to each other, one concludes that $U_{T,S}$ is an isometry: for let $\{x,y\}\in T$ and $ \{v,u\}\in S$, then by \eqref{E:adjointpair2}
\begin{align*}
     \left\|U_{T,S}\pair{\{x,y\}}{\{v,u\}} \right\|^2&=\|x-u\|^2+\|y+v\|^2\\
     &=\|x\|^2+\|y\|^2+\|v\|^2+\|u\|^2+2\re [\sip yv^{}_{\kil}-\sip xu^{}_{\hil}]\\
     &=\left\|\pair{\{x,y\}}{\{v,u\}} \right\|^2.
\end{align*}

Assume now (i). Since the corresponding canonical contractions of $T$ and $\overline T$ (resp., of $S$ and $\overline{S}$) are identical,  we may assume without loss of generality that both $S$ and $T$ are closed.  Denote by $W$ the `flip' operator \eqref{E:Wflip}. If we have $S^*=T$ and $T^*=S$, then the orthogonal decomposition
\begin{equation*}
    \hil\times\kil = T\,\widehat \oplus\, W(S)
\end{equation*}
implies that for every pair $\{h,k\}\in\hil\times \kil$ there exists $\{x,y\}\in T$ and $\{v,u\}\in S$ such that 
\begin{align*}
     \{h,k\}=\{x,y\}+\{-u,v\}= U_{T,S}\pair{\{x,y\}}{\{v,u\}},
\end{align*}
which means that $U_{S,T}$ is surjective, and hence a unitary operator. As a consequence we get $U^{}_{T,S}U_{T,S}^*=I^{}_{\hil\times\kil}$, that is, \begin{equation*}
    \opmatrix{I_\hil}{0}{0}{I_\kil}=\opmatrix{P_T^{}P_T^*+Q^{}_SQ_S^*}{P_T^{}Q_T^{*}-Q_S^{}P_S^{*}}{Q_T^{}P_T^{*}-P_S^{}Q_S^{*}}{P_S^{}P_S^*+Q_T^{}Q_T^*},
\end{equation*}
that clearly implies (ii). 

For the converse, assume (ii) and also that $S$, $T$ are closed. Since $S$ and $T$ are adjoint to each other, it suffices to show that $T^*\subset S$ and $S^*\subset T$. Consider a pair $\{w,z\}\in T^*$. By (ii) (a)-(c), we infer that $U_{T,S}$ is unitary and therefore we can find $\{x,y\}\in T$ and $\{v,u\}\in S$ such that 
\begin{equation*}
    \{-z,w\}=U_{T,S}\pair{\{x,y\}}{\{v,u\}}=\{x-u,y+v\}.
\end{equation*}
Here, 
\begin{equation*}
    \sip[\big]{\{x,y\}}{\{-u,v\}}=0=\sip[\big]{\{x,y\}}{\{-z,w\}},
\end{equation*}
and hence, 
\begin{equation*}
    \|\{x,y\}\|^2= \sip[\big]{\{x,y\}}{\{x-u,y+v\}}= \sip[\big]{\{x,y\}}{\{-z,w\}}.
\end{equation*}
This entails that $\{x,y\}=\{0,0\}$, and hence that $\{w,z\}=\{v,u\}\in S$. An analogous argument shows that $S^*\subset T$. 
\end{proof}
\begin{remark}
 Some characterizations of those linear operators $S,T$ which satisfy  identities $S^*=T$ and $T^*=S$ where given in \cite{SebTarcs2019} by means of the operator matrix 
 \begin{equation*}
     M_{T,S}\coloneqq \opmatrix{I_{\hil}}{-S}{T}{I_{\kil}},
 \end{equation*}
 cf. also \cites{SebTarcs2016,Popovici}. The general case of linear relations was discussed in \cite{Sandovici} in the same spirit. For an exact interpretation of matrices with linear relation entries the reader is referred to \cite{Hassi2020}.
\end{remark}
\section{Products of graph contractions}

Let $T$ be linear relation between the Hilbert spaces $\hil$ and $\kil$ and consider its  canonical graph contractions $\pt:\gil(T)\to \hil$ and $\qt:\gil(T)\to\kil$. Then the following four operators $ \pt P_T^*, \pt Q_T^*, \qt P_T^*$ and $ \qt Q_T^* $ are all well defined linear contractions between the appropriate Hilbert spaces. In this section we clarify their role and connection with the relations $T$ and $T^*$. 
\begin{lemma}\label{L:TT}
Let $T$ be a closed linear relation between $\hil$ and $\kil$, then
\begin{enumerate}[label=\textup{(\alph*)}]
    \item $\mul (TT^*)=\mul T$,
    \item  $T^*T=(\treg)^*\treg^{},$
    \item  $(TT^*)_{\reg}=\treg (T^*)_{\reg}$.
\end{enumerate}
\end{lemma}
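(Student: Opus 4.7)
My plan is to establish the three parts in order, since (a) feeds into (c) and the technique behind (b) provides the template for (c).

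For part (a), I would argue directly from the definition of the product. If $\{0,z\} \in TT^*$, there exists $y \in \hil$ with $\{0,y\} \in T^*$ and $\{y,z\} \in T$. The first membership gives $y \in \mul T^* = (\dom T)^{\perp}$, while the second forces $y \in \dom T$. Hence $y = 0$ and the latter pair becomes $\{0,z\} \in T$, that is, $z \in \mul T$. The reverse inclusion is trivial by choosing $y = 0$.

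For part (b), I would first describe $(T_{\reg})^*$ by unwinding the defining inner product identity: $\{v,u\} \in (T_{\reg})^*$ if and only if $\{(I-P_m)v, u\} \in T^*$. Composing this description with the definition $T_{\reg} = (I-P_m)T$ gives
\[
\{x,w\} \in (T_{\reg})^* T_{\reg} \iff \exists y:\ \{x,y\} \in T \text{ and } \{(I-P_m)y, w\} \in T^*,
\]
whereas
\[
\{x,w\} \in T^* T \iff \exists y':\ \{x,y'\} \in T \text{ and } \{y', w\} \in T^*.
\]
The two conditions match thanks to the fundamental orthogonality $\dom T^* \subseteq (\mul T)^{\perp}$ (valid for closed $T$): on the right-hand side the witness $y'$ already lies in $(\mul T)^{\perp}$, hence $(I-P_m)y' = y'$; on the left-hand side, subtracting $\{0, P_m y\} \in T$ from $\{x, y\}$ permits the replacement of $y$ by $(I-P_m)y$ in the first membership, and yields the required witness.

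For part (c), which I expect to be the main obstacle because it simultaneously juggles projections on the $\kil$-side (onto $\mul T$) and on the $\hil$-side (onto $\mul T^* = (\dom T)^{\perp}$, call this projection $Q$), I would combine (a) with a symmetric version of the bookkeeping used in (b). By (a), $\mul(TT^*) = \mul T$, so $(TT^*)_{\reg}$ is obtained by applying $I-P_m$ to the second coordinate of $TT^*$. Given $\{k,z\} \in TT^*$ with intermediate witness $y$, the orthogonality $\dom T \subseteq (\mul T^*)^{\perp}$ forces $(I-Q)y = y$, so $\{k, y\} \in (T^*)_{\reg}$ verbatim, and $\{y, (I-P_m)z\} \in T_{\reg}$ by definition; this proves $(TT^*)_{\reg} \subseteq T_{\reg}(T^*)_{\reg}$. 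The opposite inclusion takes $\{k,y_0\} \in T^*$ with $y' = (I-Q)y_0 \in \dom T$ and $\{y', w_0\} \in T$, and requires $\{k, y'\} \in T^*$; this follows by subtracting the element $\{0, Qy_0\} \in T^*$ (which belongs to $T^*$ since $Qy_0 \in \mul T^*$) from $\{k, y_0\}$. The delicate point is to apply every projection on the correct coordinate and to invoke the correct orthogonality between domains and multivalued parts at each step.
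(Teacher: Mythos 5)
Your argument is correct and follows essentially the same route as the paper: (a) is the same $\dom$--$\mul$ orthogonality argument, and your element chases in (b) and (c) use exactly the ingredients of the paper's proof, namely $\dom T^*\subseteq(\mul T)^{\perp}$, $\dom T\subseteq(\mul T^*)^{\perp}$, the fact that $\{0,P_m y\}\in T$ for closed $T$, and part (a) to identify the projection defining $(TT^*)_{\reg}$. Your characterization $\{v,u\}\in(\treg)^*\iff\{(I-P_m)v,u\}\in T^*$ is just a reformulation of the paper's decomposition $(\treg)^*=T^*\,\widehat\oplus\,(\mul T\times\{0\})$, so the proposal involves no genuinely different idea.
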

\begin{proof}
(a) Let $k\in\mul T$, then $\{0,0\}\in T^*$ and $\{0,k\}\in T$ implies that $k\in \mul TT^*$. Assume on the converse that $k\in\mul TT^*$, then there exists $u$ such that  $\{0,u\}\in T^*$ and $\{u,k\}\in T^*$. Since we have $u\in \dom T^*\cap \mul T=\{0\}$, it follows that $\{0,k\}\in T$ and therefore that $k\in\mul T$. 

(b) First we show inclusion $T^*T\subset(\treg)^*\treg^{}$. Take $\{x,z\}\in T^*T$, then there exists $y$ such that  $\{x,y\}\in T$ and $\{y,z\}\in T^*$. In particular we have $y\in\dom T^*\subseteq \mul T^\perp$, thus $\{x,y\}=\{x,(I-P_m)y\}\in\treg$.
 On the other hand, we have inclusion $\treg\subset  T$ by closedness, so $T^*\subset(\treg)^*$. Consequently, $\{x,z\}\in (\treg)^*\treg$, indeed. Let now $\{x,z\}\in(\treg)^*\treg$, then there exists $y$ such that $\{x,y\}\in \treg$ and $\{y,z\}\in (\treg)^*$. Here we have $\{x,y\}\in T$ as $\treg\subseteq T$. Furhtermore, $(\treg)^*$ can be written as 
\begin{equation*}
    (\treg)^*=T^*\,\widehat\oplus\,(\mul T\times\{0\}),
\end{equation*}
where $\widehat\oplus$ denotes Minkowski direct sum. This yields us $\{k,h\}\in T^*$ and $w\in\mul T$ such that $\{y,z\}=\{k,h\}+\{w,0\}$. Since $y,k\in\mul T^\perp,$ we get $w=0$ and $y=k$, consequently $\{y,z\}\in T^*$ and $\{x,z\}\in T^*T$.

(c) By (a) we have $\mul T=\mul (TT^*)$, hence
\begin{equation*}
    (TT^*)_{\reg}=(I-P_m) TT^*=T_{\reg} T^*\supset T_{\reg} (T^*)_{\reg},
\end{equation*}
because $(T^*)_{\reg}\subset T^*$. To see the converse inclusion take $\{v,(I-P_m)w\}\in (TT^*)_{\reg}$, and let $\{v,u\}\in T^*$ and $\{u,w\}\in T$ for some $u$, then $\{u,(I-P_m)w\}\in \treg$ and from $u\in \dom T$ we get that $u\in (\mul  \overline{T})^{\perp}$, hence $\{v,u\}\in (T^*)_{\reg}$. Thus $\{v,(I-P_m)w\}\in T_{\reg} (T^*)_{\reg}.$
\end{proof}
 
In the next theorem we are going to deal with the contractions $\pt P_T^*, \qt Q_T^*, \pt Q_T^*$ and $\qt P_T^*$. 

\begin{theorem}\label{T:maintheorem}
Let $T$ be a closed linear relation between $\hil$ and $\kil$. Then 
\begin{enumerate}[label=\textup{(\alph*)}]
    \item $P^{}_TP_T^*=(I+T^*T)^{-1}$,
    \item $Q_T^{}P_T^*=\treg(I+T^*T)^{-1}$,
    \item $P_T^{}Q_T^{*}=(T^*)_{\reg}(I+TT^*)^{-1}$,
    \item $Q^{}_TQ_T^*=I-(I+TT^*)^{-1}=P_m+(TT^*)_{\reg}(I+TT^*)^{-1}$.
\end{enumerate}
\end{theorem}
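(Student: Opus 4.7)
The plan is to establish the four identities in order, leveraging Lemma \ref{L:lemma1}, Theorem \ref{T:theorem1}, Lemma \ref{L:TT}, and Theorem \ref{T:adjointpair} applied to the adjoint pair $(T,T^*)$.

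For (a), I would start from an arbitrary $h\in\hil$ and set $\{x,y\}\coloneqq P_T^* h$, so that $x=P_T^{}P_T^* h$. The computation already performed in the proof of Lemma \ref{L:lemma1} yields $\{y,h-x\}\in T^*$; combined with $\{x,y\}\in T$ this gives $\{x,h-x\}\in T^*T$ and hence $\{x,h\}\in I+T^*T$, i.e.\ $P_T^{}P_T^*\subseteq (I+T^*T)^{-1}$ as relations. To promote this inclusion to equality I would verify that $\ker(I+T^*T)=\{0\}$: if $\{x,y\}\in T$ and $\{y,-x\}\in T^*$, the defining adjoint identity, tested against $\{x,y\}$ itself, forces $\|y\|_\kil^2+\|x\|_\hil^2=0$; since $P_T^{}P_T^*$ is everywhere defined on $\hil$, equality of the relations follows.

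With (a) in hand, (b) is essentially immediate: the pair $\{x,y\}=P_T^* h$ belongs to $\ran P_T^*\subseteq\treg$ by Theorem \ref{T:theorem1} (recalling that $T$ closed implies $\treg$ closed, so $\overline{\treg}=\treg$), and therefore $Q_T^{}P_T^* h=y=\treg x=\treg(I+T^*T)^{-1}h$. For (c) I would apply (b) to the closed relation $T^*$ (using $T^{**}=T$) to obtain $Q_{T^*}^{}P_{T^*}^*=(T^*)_{\reg}(I+TT^*)^{-1}$, invoke Theorem \ref{T:adjointpair}(ii)(c) for the pair $(T,T^*)$ to get $Q_T^{}P_T^*=P_{T^*}^{}Q_{T^*}^*$, and take adjoints of this bounded-operator identity to conclude $P_T^{}Q_T^*=Q_{T^*}^{}P_{T^*}^*=(T^*)_{\reg}(I+TT^*)^{-1}$.

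For the first equality in (d), I would apply (a) to $T^*$ to get $P_{T^*}^{}P_{T^*}^*=(I+TT^*)^{-1}$, then combine with Theorem \ref{T:adjointpair}(ii)(b) for the pair $(T,T^*)$ to conclude $Q_T^{}Q_T^*=I-(I+TT^*)^{-1}$. For the second equality I would fix $h\in\kil$ and set $k\coloneqq (I+TT^*)^{-1}h$, so that $\{k,h-k\}\in TT^*$; the decomposition $h-k=(I-P_m)(h-k)+P_m(h-k)$, combined with the inclusions $\ran (I+TT^*)^{-1}\subseteq \dom T^*\subseteq(\mul T)^\perp$ and the identity $\mul TT^*=\mul T$ from Lemma \ref{L:TT}(a), yields $P_m k=0$; consequently $(I-P_m)(h-k)=(TT^*)_{\reg} k$ and $P_m(h-k)=P_m h$, and rearranging produces the claimed identity.

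The main technical obstacle I anticipate is the bookkeeping in part (d), where the interplay between the everywhere-defined contraction $(I+TT^*)^{-1}$ and the genuinely multi-valued relation $TT^*$ must be tracked carefully; the crucial input is Lemma \ref{L:TT}(a), which guarantees that the projection $P_m$ appearing in $(TT^*)_{\reg}$ is the same one that annihilates $\ran(I+TT^*)^{-1}$.
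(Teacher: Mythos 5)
Your proposal is correct, and while it follows the paper's overall skeleton (prove (a), deduce (b), pass to $T^*$ and use Theorem \ref{T:adjointpair}(ii) for (c) and the first identity of (d)), two of the key steps are argued by genuinely different means. For (a), the paper introduces the Moore--Penrose-type right inverse $P_T^{\dagger}u=\{u,\treg u\}$, runs an inner-product computation to show $h=(I+(\treg)^*\treg)P_T^{}P_T^*h$, and then needs Lemma \ref{L:TT}(b) to replace $(\treg)^*\treg$ by $T^*T$; you instead read off the graph inclusion $P_T^{}P_T^*\subseteq (I+T^*T)^{-1}$ directly from the computation in Lemma \ref{L:lemma1} and upgrade it to equality via the elementary injectivity of $I+T^*T$ together with the everywhere-definedness and single-valuedness of the two sides. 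This is shorter, avoids both $P_T^{\dagger}$ and Lemma \ref{L:TT}(b), and as a by-product gives surjectivity of $I+T^*T$ for free. For the second identity in (d), the paper uses $(I-P_m)Q_T=\treg P_T$, part (c), Lemma \ref{L:TT}(c) and the relation $P_mQ_T^{}Q_T^*=P_m$; you instead evaluate $I-(I+TT^*)^{-1}$ pointwise, writing $h-k$ with $k=(I+TT^*)^{-1}h\in\dom TT^*\subseteq(\mul T)^{\perp}$ and splitting off $P_m$, which bypasses Lemma \ref{L:TT}(c) and the auxiliary intertwining identity altogether. One shared fine point: both arguments tacitly identify the projection used in forming $(TT^*)_{\reg}$ with $P_m$ (i.e. $\mul\overline{TT^*}=\mul TT^*=\mul T$); the paper does the same in Lemma \ref{L:TT}(c), and in your setting it can be justified a posteriori since $(I+TT^*)^{-1}=P_{T^*}^{}P_{T^*}^*$ is bounded and everywhere defined, whence $TT^*$ is closed — so this is not a gap relative to the paper, but it is worth a remark if you write the argument out in full.
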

\begin{proof}
(a) 
Let us introduce the linear operator
\begin{equation*}
    P_T^{\dagger}:\dom T\to \hil\times \kil, \qquad P_T^{\dagger}u\coloneqq \{u, \treg u\}.
\end{equation*}
Observe that $P_T^{\dagger}u\in \treg\subseteq T$ for every $u\in\dom T$, and that 
\begin{equation}\label{E:PdaggerP}
P_T^{\dagger}P^{}_T\{u,\treg u\}=\{u,\treg u\}.    
\end{equation}
Since $\ran P_T^*\subset \treg$ by Theorem \ref{T:theorem1}, from \eqref{E:PdaggerP} it follows that $P_T^\dagger P^{}_TP_T^*=P_T^*$. 

Let now $x\in\dom T$ and $h\in\hil$, then 
\begin{align*}
    \sip{x}{h}^{}_{\hil}&=\sip{P^{}_TP_T^{\dagger}x}{h}^{}_{\hil}=\sipt{P_T^{\dagger}x}{P_T^*h}=\sipt{P_T^\dagger x}{ P_T^\dagger P^{}_TP_T^*h}\\
    &=\sipt[\big]{\{x,\treg x\}}{\{P^{}_TP_T^*h,\treg P^{}_TP_T^*h\}}\\
    &=\sip{x}{P^{}_TP_T^*h}^{}_{\hil}+\sip{\treg x}{\treg P^{}_TP_T^*h}^{}_{\kil},
\end{align*}
consequently, 
\begin{equation*}
    \sip{\treg x}{\treg P^{}_TP_T^*h}^{}_{\kil}=\sip{x}{h-P_T^{}P_T^*h}^{}_{\hil}.
\end{equation*}
This implies that
\begin{equation*}
    P^{}_TP_T^*h\in\dom (\treg)^{*}\treg^{}\qquad\mbox{and}\qquad h=(I+(\treg)^{*}\treg^{})P_T^{}P_T^*h,
\end{equation*}
that is, $P_TP_T^*=(I+(T^*)_{\reg}\treg^{})^{-1}$. Since we have identity $T^*T=(T^*)_{\reg}\treg^{}$ by Lemma \ref{L:TT}, the proof of part (a) is complete.

(b) Take any vector $h\in\hil$. From (a) and equality $P_T^{\dagger}P^{}_TP_T^*=P_T^*$ we conclude that 
\begin{align*}
    Q^{}_TP_T^*h&=Q^{}_TP_T^{\dagger}P^{}_TP_T^*h=Q^{}_TP_T^{\dagger}(I+T^*T)^{-1}h\\
    &=Q_T\{(I+T^*T)^{-1}h,\treg(I+T^*T)^{-1}h\}\\
    &=\treg(I+T^*T)^{-1}h,
\end{align*}
whence we get identity (b).

(c) Replacing  $T$ by $T^*$ in (b), we obtain that  
\begin{equation*}
    Q^{}_{T^*}P_{T^*}^*=(T^*)_{\reg}(I+TT^*)^{-1}.
\end{equation*}
On the other hand, it follows from Theorem \ref{T:theorem1} (ii) (c) that $P^{}_TQ_T^*=Q_{T^*}^{}P_{T^*}^*$, hence the desired identity follows.

(d) First we note that 
\begin{equation*}
    (I-P_m)Q_T=\treg P_T,
\end{equation*}
because for $\{x,y\}\in T$, 
\begin{equation*}
(I-P_m)Q_T\{x,y\}=(I-P_m)y=\treg x=\treg P_T\{x,y\}.    
\end{equation*}
From this and (c) we get that 
\begin{align*}
    (I-P_m)Q^{}_TQ_T^{*}&=\treg P^{}_TQ_T^{*}=\treg (T^*)_{\reg}(I+TT^*)^{-1}\\
    &=(TT^*)_{\reg}(I+TT^*)^{-1}.
\end{align*}
On the other hand, we have
\begin{equation*}
    Q_T^{}Q_T^{*}=I-P_{T^*}^{}P_{T^*}^*,
\end{equation*}
by Theorem \ref{T:adjointpair} (ii) (b). Since $\ran P_{T^*}=\dom T^*\subset (\mul T)^{\perp}$,  we get 
\begin{equation*}
    P_mQ_T^{}Q_T^{*}=P_m-P_mP_{T^*}^{}P_{T^*}^*=P_m.
\end{equation*}
From the above identities we get 
\begin{equation*}
    Q_T^{}Q_T^{*}=P_mQ_T^{}Q_T^{*}+(I-P_m)Q_T^{}Q_T^{*}=P_m+(TT^*)_{\reg}(I+TT^*)^{-1},
\end{equation*}
which completes the proof.
\end{proof}

\begin{remark}
 We notice that $P_T^{\dagger}$ appearing in the proof of the preceding theorem is identical with the Moore-Penrose inverse of $P_T$, cf. \cite{BenIsrael} or \cite{Beutler}. We also remark that the proof might be  slightly simplified when $T$ is a closed operator. Namely, in that case we have $T=\treg$ and $P^{\dagger}_T=P_T^{-1}$.
\end{remark}
\begin{corollary}
Let $T$ be a densely defined closed linear operator between two Hilbert spaces. Then  
\begin{enumerate}[label=\textup{(\alph*)}]
    \item $P^{}_TP_T^*=(I+T^*T)^{-1}$,
    \item $Q_T^{}P_T^*=T(I+T^*T)^{-1}$,
    \item $P_T^{}Q_T^{*}=T^*(I+TT^*)^{-1}$,
    \item $Q^{}_TQ_T^*=TT^*(I+TT^*)^{-1}$.
\end{enumerate}
\end{corollary}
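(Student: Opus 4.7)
The plan is to derive this corollary as a direct specialization of Theorem \ref{T:maintheorem} to the case where the multi-valued parts drop out. Since $T$ is a (single-valued) operator, $\mul T = \{0\}$; since $T$ is densely defined, $\mul T^* = (\dom T)^\perp = \{0\}$. I would record these two observations at the start and note that closedness of $T$ puts us in the setting of Theorem \ref{T:maintheorem}.

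From $\mul T = \{0\}$, the projection $P_m$ onto $\mul T$ is the zero operator, so $\treg = (I - P_m)T = T$. Symmetrically, since $T^*$ is a closed operator with trivial multi-valued part, applying the same reasoning (with the roles of $T$ and $T^*$ swapped, which is legitimate because $T^{**} = T$) gives $(T^*)_{\reg} = T^*$. Parts (a), (b), (c) of the corollary are then immediate by substituting these identities into the corresponding parts of Theorem \ref{T:maintheorem}.

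For part (d), I would invoke Lemma \ref{L:TT}(a), which tells us $\mul(TT^*) = \mul T = \{0\}$. Thus the projection in the formula $Q_T Q_T^* = P_m + (TT^*)_{\reg}(I+TT^*)^{-1}$ again vanishes and $(TT^*)_{\reg} = TT^*$, yielding $Q_T Q_T^* = TT^*(I + TT^*)^{-1}$.

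Since the work is really just bookkeeping about which projections vanish in the operator case, there is no genuine obstacle here; the only point worth a sentence of justification is the identification $(T^*)_{\reg} = T^*$, which I would handle either by directly invoking $\mul T^* = \{0\}$ as above or by noting (as the remark following Theorem \ref{T:maintheorem} already does) that $T = \treg$ simplifies the proof in the operator setting and that the same simplification applies to $T^*$.
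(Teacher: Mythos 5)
Your proposal is correct and is essentially the paper's own argument: the paper likewise deduces the corollary directly from Theorem \ref{T:maintheorem} by observing that $P_m=0$ for a densely defined closed operator, which is exactly your bookkeeping that $\treg=T$, $(T^*)_{\reg}=T^*$ (via $\mul T^*=(\dom T)^\perp=\{0\}$) and $(TT^*)_{\reg}=TT^*$ (via Lemma \ref{L:TT}(a)). Your write-up is just slightly more explicit than the paper's one-line proof.
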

\begin{proof}
The proof is straightforward from Theorem \ref{T:maintheorem} by noticing that $P_m=0$ whenever $T$ is a densely defined closed operator. 
\end{proof}

In the next proposition we describe the kernel and range spaces of the contractions $\qt$ and $Q_T^*$.
\begin{proposition}
For every linear relation $T$ between $\hil$ and $\kil$ we have
\begin{enumerate}[label=\textup{(\alph*)}]
    \item $\ker Q_T=(\hil\times\{0\})\cap \overline{T}$,
    \item $\ran Q_T=\ran \overline T$,
    \item $\ker Q_T^*=(\ran T)^{\perp}$,
    \item $\ran Q_T^*=(\overline{\treg})|_{\ran T^*\cap \dom \overline{T}}\,\widehat\oplus(\{0\}\times \mul \overline T)$.
\end{enumerate}
\end{proposition}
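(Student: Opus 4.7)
Parts (a)--(c) will be essentially immediate. For (a), $\qt\{x,y\}=y$ so $\ker\qt$ consists exactly of the pairs $\{x,0\}\in\overline T$; (b) is just the definition of $\ran\overline T$; and (c) follows from the standard identity $\ker\qt^{*}=(\overline{\ran\qt})^{\perp}$ together with the observation that $\overline{\ran\overline T}=\overline{\ran T}$, since any $\{x,y\}\in\overline T$ is a norm limit of $\{x_n,y_n\}\in T$, hence $y\in\overline{\ran T}$.

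For (d) I would first reduce to the case where $T$ is closed (using $\qt=Q_{\overline T}$ and $\overline{\treg}=(\overline T)_{\reg}$), so the goal becomes $\ran Q_T^{*}=\treg|^{}_{\ran T^{*}\cap\dom T}\,\widehat\oplus\,(\{0\}\times\mul T)$. The plan for the inclusion ``$\subseteq$'' proceeds as follows. Given $k\in\kil$, set $\{w,z'\}\coloneqq Q_T^{*}k$. Theorem \ref{T:maintheorem}(c) yields $w=P_TQ_T^{*}k=(T^{*})_{\reg}(I+TT^{*})^{-1}k$, and since $(T^{*})_{\reg}\subseteq T^{*}$ (as $T^{*}$ is closed), this forces $w\in\ran T^{*}$; combined with the automatic $w\in\dom T$, we get $w\in\ran T^{*}\cap\dom T$. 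Splitting $z'=(I-P_m)z'+P_mz'$, the term $\{0,P_mz'\}$ lies in $\{0\}\times\mul T$, whereas $\{w,(I-P_m)z'\}=\{w,z'\}-\{0,P_mz'\}\in T$ has second coordinate in $(\mul T)^{\perp}$ and therefore lies in $\treg|^{}_{\ran T^{*}\cap\dom T}$.

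For the reverse inclusion, I would first verify directly, from the inner-product definition of $Q_T^{*}$, that $Q_T^{*}m=\{0,m\}$ for every $m\in\mul T$, giving $\{0\}\times\mul T\subseteq\ran Q_T^{*}$. Then, given $\{x,z\}\in\treg|^{}_{\ran T^{*}\cap\dom T}$, I would pick $v$ with $\{v,x\}\in T^{*}$ (available because $x\in\ran T^{*}$), set $k\coloneqq v+z$, and apply the adjoint pair relation \eqref{E:adjointpair2} to $\{v,x\}\in T^{*}$ to reduce $\sipt{\{x,z\}}{\{u,y\}}$ to $\sip{k}{y}^{}_{\kil}$ for every $\{u,y\}\in T$; this forces $Q_T^{*}k=\{x,z\}$ by the defining property of the adjoint. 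Finally, the sum is genuinely direct, since any common element of the two summands must have first coordinate $0$ and second coordinate in $\mul T\cap(\mul T)^{\perp}=\{0\}$. The main obstacle I anticipate is the clean identification of $w=P_TQ_T^{*}k$ as an element of $\ran T^{*}$ (and not merely of $\ran(T^{*})_{\reg}$), which ultimately rests on the inclusion $(T^{*})_{\reg}\subseteq T^{*}$ holding precisely because $T^{*}$ is closed; the remaining steps are careful bookkeeping on graph inner products and the defining property of the adjoint.
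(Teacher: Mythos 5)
Parts (a)--(c) are fine and coincide with the paper's treatment. For (d) your argument is correct, but it follows a partly different route from the paper's. The paper gets (d) in one stroke as a chain of set identities: it writes $\ran Q_T^*=\set{\{\pt Q_T^*k,\qt Q_T^*k\}}{k\in\kil}$, substitutes \emph{both} parts (c) and (d) of Theorem \ref{T:maintheorem}, uses $\ker(I+TT^*)^{-1}=\mul TT^*=\mul T=\ran P_m$ to split off $\{0\}\times\mul T$, and finally identifies $\ran(T^*)_{\reg}\cap\dom\treg$ with $\ran T^*\cap\dom T$. You instead prove the two inclusions separately: your forward inclusion uses only Theorem \ref{T:maintheorem}(c) (to place the first coordinate of $Q_T^*k$ in $\ran T^*$, via $(T^*)_{\reg}\subseteq T^*$) together with the splitting $z'=(I-P_m)z'+P_mz'$, while your reverse inclusion bypasses Theorem \ref{T:maintheorem} entirely, exhibiting the explicit preimage $k=v+z$ and checking the defining property of the adjoint, plus the elementary identity $Q_T^*m=\{0,m\}$ for $m\in\mul T$. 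This buys a more self-contained and hands-on argument (you never need Theorem \ref{T:maintheorem}(d) nor the kernel identity), at the price of a bit more bookkeeping; in fact your forward step could also be freed of Theorem \ref{T:maintheorem} by mimicking Lemma \ref{L:lemma1} for $Q_T^*$: from $\sip{w}{u}^{}_{\hil}+\sip{z'}{y}^{}_{\kil}=\sip{k}{y}^{}_{\kil}$ for all $\{u,y\}\in T$ one reads off $\{k-z',w\}\in T^*$ directly. Two minor remarks: you correctly verify that the sum is direct (second coordinates of elements of $\treg$ lie in $(\mul T)^{\perp}$), and the orthogonality implicit in the symbol $\widehat\oplus$ is equally immediate, since the first coordinate of every element of $\{0\}\times\mul T$ vanishes and the second coordinates of the two summands are orthogonal.
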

\begin{proof}
Throughout the proof we shall assume, for sake of simplicity, that $T$ is  closed. Statement (a) follows immediatly from
\begin{align*}
    \ker Q_T=\set[\big]{\{x,y\}\in T}{y=0}=\set[\big]{\{x,0\}}{\{x,0\}\in T}.
\end{align*}
Statements (b) and (c) are straightforward form the very definition of $Q_T$. Finally, let us prove assertion (d).  According to Theorem \ref{T:maintheorem} (c) and (d) we have
\begin{align*}
    \ran Q_T^*&=\set[\big]{\{P^{}_TQ_T^*k,Q_T^{}Q_T^*k\}}{k\in \kil}\\
    &=\set[\big]{\{(T^*)_{\reg}(I+TT^*)^{-1}k,\treg(T^*)_{\reg}(I+TT^*)^{-1}k+P_mk}{k\in \kil}\\
    &=\set[\big]{\{(T^*)_{\reg}(I+TT^*)^{-1}k,(TT^*)_{\reg}(I+TT^*)^{-1}k\}}{k\in \kil}\widehat+(\{0\}\times \mul T),
\end{align*}
where in the last equality we used that 
\begin{equation*}
    \ker (I+TT^*)^{-1}=\mul TT^*=\mul T=\ran P_m.
\end{equation*}
We have on the other hand
\begin{align*}
    &\set[\big]{\{(T^*)_{\reg}(I+TT^*)^{-1}k,(TT^*)_{\reg}(I+TT^*)^{-1}k\}}{k\in \kil}=\\
    &\qquad\qquad=\set[\big]{\{(T^*)_{\reg}z,\treg(T^*)_{\reg} z\}}{z\in \dom \treg(T^*)_{\reg}}\\
    &\qquad \qquad=\treg|_{\ran (T^*)_{\reg}\cap\,\dom \treg}.
\end{align*}
Finally we note that $\dom \treg=\dom T$ and that $\mul T^*=\overline{\dom T}$ and therefore 
\begin{equation*}
    \ran (T^*)_{\reg}\cap\,\dom \treg=\ran T^*\cap\,\dom T.  
\end{equation*}
This together with the above observations yields identity (d).
\end{proof}

\begin{corollary}
Let $T$ be a closed linear operator between $\hil$ and $\kil$, then 
\begin{equation*}
    \ran Q_T^*=T|_{\ran T^*\cap\, \dom T}.
\end{equation*}
\end{corollary}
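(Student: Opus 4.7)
The plan is to deduce this corollary as a direct specialization of part (d) of the preceding proposition, which asserts that
\begin{equation*}
    \ran Q_T^*=(\overline{\treg})|_{\ran T^*\cap \dom \overline{T}}\,\widehat\oplus(\{0\}\times \mul \overline T).
\end{equation*}
The point is to observe that when $T$ is a closed operator, every ingredient on the right-hand side collapses to the desired form.

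First I would note that since $T$ is closed we have $\overline T=T$, so $\dom\overline T=\dom T$. Second, since $T$ is an operator, $\mul T=\{0\}$, and closedness of $T$ then gives $\mul\overline T=\{0\}$. This immediately kills the second summand in the Minkowski direct sum, so $\{0\}\times \mul\overline T=\{0,0\}$ contributes nothing.

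Next I would observe that $\mul\overline T=\{0\}$ means the orthogonal projection $P_m$ of $\kil$ onto $\mul\overline T$ is the zero operator. By the very definition \eqref{E:Treg} of the regular part, this forces $\treg=(I-P_m)T=T$, and hence also $\overline{\treg}=T$. Substituting these simplifications back into the formula of the preceding proposition yields
\begin{equation*}
    \ran Q_T^*=T|_{\ran T^*\cap \dom T},
\end{equation*}
which is the claim.

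There is no real obstacle here; the entire content is the routine verification that $\mul\overline T=\{0\}$ for a closed operator $T$, which makes $P_m=0$ and thereby reduces the general relation formula to the operator formula.
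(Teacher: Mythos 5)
Your proposal is correct and follows exactly the route the paper intends: the corollary is the specialization of part (d) of the preceding proposition, where closedness gives $\overline T=T$ and the operator property gives $\mul\overline T=\{0\}$, hence $P_m=0$, $\overline{\treg}=\treg=T$, and the singular summand $\{0\}\times\mul\overline T$ vanishes. Nothing is missing.
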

\begin{corollary}
Let $T$ be a closed linear relation between $\hil$ and $\kil$, then
\begin{equation*}
     \treg=\treg|_{\dom T^*T}\,\widehat +\, \treg|_{\ran T^*\cap\, \dom T}.
\end{equation*}
If $T$   is a closed operator, then 
\begin{equation*}
    T=T|_{\dom T^*T}\,\widehat +\, T|_{\ran T^*\cap\, \dom T}.
\end{equation*}
\end{corollary}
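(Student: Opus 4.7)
The plan is to derive the decomposition directly from the identifications of $\ran P_T^*$ and $\ran Q_T^*$ established in the earlier sections, combined with the observation that the graph of $T$ itself splits componentwise as $\ran P_T^*\,\widehat+\,\ran Q_T^*$. To establish this latter splitting I would first verify the pointwise identity
\begin{equation*}
\{x,y\}=P_T^*x+Q_T^*y\qquad(\{x,y\}\in T),
\end{equation*}
by a one-line check: for every test element $\{u,v\}\in T$ one has
\begin{equation*}
\sipt{P_T^*x+Q_T^*y}{\{u,v\}}=\sip{x}{u}_{\hil}+\sip{y}{v}_{\kil}=\sipt{\{x,y\}}{\{u,v\}}.
\end{equation*}
Read as an equality of vectors in $\hil\times\kil$, this immediately yields the Minkowski decomposition $T=\ran P_T^*\,\widehat+\,\ran Q_T^*$; the reverse inclusion is trivial because both ranges sit inside the linear subspace $T$.

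Next I would substitute the identifications $\ran P_T^*=\treg|_{\dom T^*T}$ from \eqref{E:ranP=treg-2} and $\ran Q_T^*=\treg|_{\ran T^*\cap \dom T}\,\widehat\oplus\,(\{0\}\times \mul T)$ from part (d) of the preceding proposition, and apply $I-P_m$ to the second coordinate throughout. On $\ran P_T^*$ the operator $I-P_m$ acts as the identity, because second components of $\treg$ already lie in $(\mul T)^\perp$, while on $\ran Q_T^*$ it annihilates the $\{0\}\times \mul T$ summand and returns precisely $\treg|_{\ran T^*\cap \dom T}$. Since truncation of the second coordinate distributes over $\,\widehat+\,$, this yields
\begin{equation*}
\treg=(I-P_m)T=\treg|_{\dom T^*T}\,\widehat+\,\treg|_{\ran T^*\cap \dom T},
\end{equation*}
as claimed. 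The operator case is then an immediate specialization: when $T$ is a closed operator, $\mul T=\{0\}$ forces $P_m=0$ and $\treg=T$, collapsing the first identity to the second.

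I do not anticipate a substantial obstacle. The only point requiring care is the bookkeeping distinction between the operatorlike sum $+$ and the Minkowski sum $\,\widehat+\,$, together with the verification that truncation by $I-P_m$ correctly disposes of the spurious $\{0\}\times \mul T$ piece hidden inside $\ran Q_T^*$; all other ingredients have been put in place by the earlier results.
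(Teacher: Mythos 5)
Your argument is correct, and it shares the overall skeleton of the paper's proof: both reduce the corollary to the Minkowski decomposition $T=\ran P_T^*\,\widehat+\,\ran Q_T^*$, then substitute the identifications $\ran P_T^*=\treg|_{\dom T^*T}$ (see \eqref{E:ranP=treg-2}) and $\ran Q_T^*=\treg|_{\ran T^*\cap\,\dom T}\,\widehat\oplus\,(\{0\}\times \mul T)$, and finally strip off the $\{0\}\times\mul T$ summand. Where you genuinely differ is in how that decomposition of $T$ is obtained. The paper observes that the isometry $U_{T,T^*}$ from \eqref{E:UTS} yields $P_T^*P_T^{}+Q_T^*Q_T^{}=I_{\gil(T)}$ and then invokes the Fillmore--Williams theorem on operator ranges to conclude $T=\ran(P_T^*P_T^{}+Q_T^*Q_T^{})^{1/2}=\ran P_T^*\,\widehat+\,\ran Q_T^*$. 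You instead verify the resolution of the identity pointwise, $\{x,y\}=P_T^*x+Q_T^*y$ for $\{x,y\}\in T$, by testing against all $\{u,v\}\in T$ -- legitimate because $T$ is closed, so $\gil(T)=T$ and the test vectors are total -- and the inclusion $\ran P_T^*\,\widehat+\,\ran Q_T^*\subseteq T$ is indeed trivial. This route is more elementary: it bypasses both the adjoint-pair isometry of Theorem \ref{T:adjointpair} and the external Fillmore--Williams citation, which is in fact superfluous here since the positive sum in question is already the identity. Your closing bookkeeping -- applying $I-P_m$ to second coordinates, noting that it fixes $\ran P_T^*$, annihilates $\{0\}\times\mul T$, returns $\treg|_{\ran T^*\cap\,\dom T}$ from $\ran Q_T^*$, and distributes over $\,\widehat+\,$ -- is the same step the paper compresses into the remark that $\treg=T\,\widehat\ominus\,(\{0\}\times\mul T)$, and the specialization to closed operators via $P_m=0$, $\treg=T$ is correct.
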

\begin{proof}
Recall that the operator matrix $U_{T,T^*}$ defined by \eqref{E:UTS} with $S\coloneqq T^*$ is an isometry hence, in particular one has $P_T^*P_T^{}+Q_T^*Q_T^{}=I_{\gil(T)}$. As a consequence, we have by \cite{FW}*{Theorem 2.2} that
\begin{align*}
    T&=\ran (P_T^*P_T^{}+Q_T^*Q_T^{})^{1/2}=\ran P_T^*\,\widehat + \ran Q_T^*\\
        &=\treg|_{\dom T^*T}\,\widehat + \,\treg|_{\ran T^*\cap\, \dom T}\,\widehat +\, (\{0\}\times \mul T).
\end{align*}
Now the desired identity follows since $\treg=T\,\widehat\ominus \,(\{0\}\times \mul T)$.
\end{proof}

\bigskip

We conclude the paper with an application of the results. Let $T$ be a closed linear relation between the Hilbert spaces $\hil$ and $\kil$ and denote by $E_T$ the orthogonal projection of $\hil\times \kil$ onto $T$. Then $E_T$ admits a matrix representation as an operator in $\hil\times \kil$:
\begin{equation*}
    E_T=\opmatrix{E_{11}}{E_{12}}{E_{21}}{E_{22}},
\end{equation*}
where the components $E_{ij}$ are bounded operators between the appropriate Hilbert spaces. Recall that $E_T$ is was called the characteristic projection of $T$ by Stone, who proved that the entries $E_{ij}$ may be expressed in terms of $T$ and $T^*$, provided that $T$ is a densely defined and closed operator (see \cite{Stone}*{Theorem 4}, cf. also \cite{Jorgensen}*{Theorem 3}):
\begin{equation*}
    E_T=\opmatrix{(T^*T+I)^{-1}}{T^*(TT^*+I)^{-1}}{T(T^*T+I)^{-1}}{TT^*(TT^*+I)^{-1}}.
\end{equation*}
In \cite{Hassi2007}*{Lemma 6.4}, the above result of Stone was extended to closed linear relations. In the ensuing theorem we are going we restate this general result as a straightforward consequence of Theorem \ref{T:maintheorem}:
\begin{theorem}
Let $T$ be a closed linear relation between the Hilbert spaces $\hil$ and $\kil$. Then the characteristic projection $E_T$ of $T$ can be written as 
\begin{equation*}
    E_T=\opmatrix{P_T^{}P_T^*}{P_T^{}Q_T^*}{Q_T^{}P_T^*}{Q_T^{}Q_T^{*}}=\opmatrix{(T^*T+I)^{-1}}{(T^*)_{\reg}(TT^*+I)^{-1}}{\treg(T^*T+I)^{-1}}{I-(TT^*+I)^{-1}}
\end{equation*}
\end{theorem}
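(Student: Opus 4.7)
The second equality is an immediate consequence of Theorem \ref{T:maintheorem} (a)--(d), so the real content of the theorem is the first equality. My plan is to realize the projection $E_T$ as $J_T^{}J_T^*$ for the canonical isometric embedding $J_T$ of $\gil(T)$ into $\hil\times\kil$, and then to read off its block structure with respect to the decomposition $\hil\times\kil=\hil\oplus\kil$.

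Concretely, let $J_T\colon\gil(T)\to\hil\times\kil$ be the inclusion map, $J_T\{x,y\}\coloneqq \{x,y\}$. Because the inner product on $\gil(T)$ is the one inherited from $\hil\times\kil$ and $T$ is closed, $J_T$ is an isometric linear map with closed range exactly equal to $T$. A standard fact about isometries with closed range then gives $J_T^{}J_T^*=E_T$, the orthogonal projection of $\hil\times\kil$ onto $T$.

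Next I would identify $J_T$ and $J_T^*$ as block operators. By definition $J_T\{x,y\}=\pair{P_T\{x,y\}}{Q_T\{x,y\}}$, so as a column block operator $J_T=\pair{P_T}{Q_T}$. Taking adjoints, a short computation using the defining relations of $P_T^*$ and $Q_T^*$ gives
\begin{equation*}
    J_T^*\{h,k\}=P_T^*h+Q_T^*k,\qquad h\in\hil,\ k\in\kil,
\end{equation*}
so $J_T^*$ is the row block operator with entries $P_T^*$ and $Q_T^*$.

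Multiplying these two block expressions by the ordinary block-matrix rule produces
\begin{equation*}
    E_T=J_T^{}J_T^*=\opmatrix{P_T^{}P_T^*}{P_T^{}Q_T^*}{Q_T^{}P_T^*}{Q_T^{}Q_T^*},
\end{equation*}
which is the first claimed identity; substituting the formulas of Theorem \ref{T:maintheorem} then yields the second matrix expression. I do not expect any serious obstacle: the only delicate points are verifying the isometric character of $J_T$, which is immediate from the definition of the inner product on $\gil(T)$, and correctly invoking the block-operator calculus, which is legitimate precisely because the four entries are bounded everywhere-defined operators between the appropriate Hilbert spaces.
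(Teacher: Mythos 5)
Your proposal is correct and follows essentially the same route as the paper: the paper also introduces the canonical isometric embedding $V_T=\pair{P_T}{Q_T}$ of $\gil(T)$ into $\hil\times\kil$, identifies $E_T=V_T^{}V_T^{*}$, reads off the block entries $P_T^{}P_T^*$, $P_T^{}Q_T^*$, $Q_T^{}P_T^*$, $Q_T^{}Q_T^*$, and then cites Theorem \ref{T:maintheorem} for the second matrix. Your extra remarks about the closedness of the range and the boundedness of the block entries are exactly the points the paper leaves implicit.
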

\begin{proof}
Consider the canonical embedding operator $V_T:\gil(T)\to\hil\times \kil$, given by
\begin{equation*}
    V_T\coloneqq \pair{P_T}{Q_T}\{x,y\}\coloneqq \{x,y\},\qquad \{x,y\}\in T.
\end{equation*}
Clearly, $V_T$ is a linear isometry with range $T$ and therefore $V_T^{}V_T^{*}$ is identical with $E_T$, i.e., 
\begin{equation*}
    E_T=\pair{P_T}{Q_T}\begin{bmatrix}P_T^*&Q_T^*\end{bmatrix}=\opmatrix{P_T^{}P_T^*}{P_T^{}Q_T^*}{Q_T^{}P_T^*}{Q_T^{}Q_T^*}.
\end{equation*}
Theorem \ref{T:maintheorem} completes now the proof.
\end{proof}
\bigskip

\bibliographystyle{abbrv}

\end{document}